\def\picRRR{\begin{tikzpicture}[scale=0.7]
\draw[thick] (-1.5,-0.5) rectangle (1.5,4.5);
\draw[ultra thick] (-1.5,1) to (1.5,1);
\draw[ultra thick] (-1.5,2) to (1.5,2);
\draw[ultra thick] (-1.5,3) to (1.5,3);
\draw[ultra thick] (-1.5,4) to (1.5,4);
\node[circle,fill=white,draw,ultra thick, inner sep=1.5pt] at (0,0) {};
\node[ultra thick] at (0,-1) {$C_0$};
\end{tikzpicture}
\hfil
\begin{tikzpicture}[scale=0.7]
\draw[thick] (-1.5,-0.5) rectangle (1.5,4.5);
\draw[ultra thick] (-1.5,1) to[out=0,in=90] (0,0) to[out=90,in=180] (1.5,1);
\draw[ultra thick] (-1.5,2) to (1.5,2);
\draw[ultra thick] (-1.5,3) to (1.5,3);
\draw[ultra thick] (-1.5,4) to (1.5,4);
\node[circle,fill=white,draw,ultra thick, inner sep=1.5pt] at (0,0) {};
\node[ultra thick] at (0,-1) {$C_1$};
\end{tikzpicture}
\hfil
\begin{tikzpicture}[scale=0.7]
\draw[thick] (-1.5,-0.5) rectangle (1.5,4.5);
\draw[ultra thick] (-1.5,1) to[out=0,in=90] (0,0) to[out=90,in=180] (1.5,2);
\draw[ultra thick] (-1.5,2) to[out=0,in=180] (1.5,1);
\draw[ultra thick] (-1.5,3) to (1.5,3);
\draw[ultra thick] (-1.5,4) to (1.5,4);
\node[circle,fill=white,draw,ultra thick, inner sep=1.5pt] at (0,0) {};
\node[ultra thick] at (0,-1) {$C_2$};
\end{tikzpicture}
\hfil
\begin{tikzpicture}[scale=0.7]
\draw[thick] (-1.5,-0.5) rectangle (1.5,4.5);
\draw[ultra thick] (-1.5,1) to[out=0,in=90] (0,0) to[out=90,in=180] (1.5,3);
\draw[ultra thick] (-1.5,2) to[out=0,in=180] (1.5,1);
\draw[ultra thick] (-1.5,3) to[out=0,in=180] (1.5,2);
\draw[ultra thick] (-1.5,4) to (1.5,4);
\node[circle,fill=white,draw,ultra thick, inner sep=1.5pt] at (0,0) {};
\node[ultra thick] at (0,-1) {$C_3$};
\end{tikzpicture}
\hfil
\begin{tikzpicture}[scale=0.7]
\draw[thick] (-1.5,-0.5) rectangle (1.5,4.5);
\draw[ultra thick] (-1.5,1) to[out=0,in=90] (0,0) to[out=90,in=180] (1.5,4);
\draw[ultra thick] (-1.5,2) to[out=0,in=180] (1.5,1);
\draw[ultra thick] (-1.5,3) to[out=0,in=180] (1.5,2);
\draw[ultra thick] (-1.5,4) to[out=0,in=180] (1.5,3);
\node[circle,fill=white,draw,ultra thick, inner sep=1.5pt] at (0,0) {};
\node[ultra thick] at (0,-1) {$C_4$};
\end{tikzpicture}}
\def\picAAA{
\begin{tikzpicture}
\node at (0,0) {$\langle0,1,0,1\rangle$};
\node at (3,0) {$\langle1,0,1,0\rangle$};
\node at (6,0) {$\langle1,1,0,0\rangle$};
\node at (10,0) {$\langle0,0,1,1\rangle$};
\node at (8,1.5) {$\langle1,0,0,1\rangle$};
\node at (8,-1.5) {$\langle0,1,1,0\rangle$};
\draw[->, thick] (1,0.125)--(2,0.125);
\draw[->, thick] (2,-0.125)--(1,-0.125);
\draw[->, thick] (4,0.125)--(5,0.125);
\draw[->, thick] (5,-0.125)--(4,-0.125);
\draw[->,thick,rounded corners] (7,-0.125)--(7.25,-0.125)--(7.25,0.125)--(7,0.125);
\draw[->,thick] (8,1.125)--(8,-1.125);
\draw[->,thick] (6.5,0.375)--(7.5,1.125);
\draw[->,thick] (7.5,-1.125)--(6.5,-0.375);
\draw[->,thick] (8.5,1.125)--(9.5,0.375);
\draw[->,thick] (9.5,-0.375)--(8.5,-1.125);
\draw[->,thick] (7,1.5)--(3.5,0.375);
\draw[->,thick] (3.5,-0.375)--(7,-1.5);
\node at (1.5,0.35) {\small $0$};
\node at (1.5,-0.35) {\small $4$};
\node at (4.5,0.35) {\small $1$};
\node at (4.5,-0.35) {\small $3$};
\node at (7.45,0) {\small $2$};
\node at (8.2,0) {\small $2$};
\node at (6.85,0.95) {\small $4$};
\node at (6.85,-0.95) {\small $0$};
\node at (9.15,0.95) {\small $4$};
\node at (9.15,-0.95) {\small $0$};
\node at (5.2,1.15) {\small $1$};
\node at (5.2,-1.15) {\small $3$};
\end{tikzpicture}}
\def\picQQQ{
\begin{tikzpicture}[scale=.5]
\fill[color=white!50!black] (0,5) circle (0.5);
\draw (1,5) circle (0.5);
\fill[color=white!50!black] (0,4) circle (0.5);
\fill[color=white!50!black] (1,4) circle (0.5);
\fill[color=white!50!black] (2,4) circle (0.5);
\fill[color=white!50!black] (3,4) circle (0.5);
\fill[color=white!50!black] (4,4) circle (0.5);
\fill[color=white!50!black] (5,4) circle (0.5);
\draw (6,4) circle (0.5);
\fill[color=white!50!black] (0,3) circle (0.5);
\fill[color=white!50!black] (1,3) circle (0.5);
\fill[color=white!50!black] (2,3) circle (0.5);
\fill[color=white!50!black](3,3) circle (0.5);
\fill[color=white!50!black](4,3) circle (0.5);
\fill[color=white!50!black] (5,3) circle (0.5);
\fill[color=white!50!black] (6,3) circle (0.5);
\fill[color=white!50!black] (7,3) circle (0.5);
\fill[color=white!50!black](8,3) circle (0.5);
\fill[color=white!50!black] (9,3) circle (0.5);
\draw (10,3) circle (0.5);
\node at (1,5) {$\times$};
\node at (6,4) {$\times$};
\node at (10,3) {$\times$};
\node at (0,2) {4};
\node at (1,2) {$\varnothing$};
\node at (2,2) {3};
\node at (3,2) {3};
\node at (4,2) {3};
\node at (5,2) {3};
\node at (6,2) {$\varnothing$};
\node at (7,2) {2};
\node at (8,2) {2};
\node at (9,2) {2};
\node at (10,2) {$\varnothing$};
\node at ( 0,6) {\emph{1}};
\node at ( 1,6) {\emph{2}};
\node at ( 2,6) {\emph{3}};
\node at ( 3,6) {\emph{4}};
\node at ( 4,6) {\emph{5}};
\node at ( 5,6) {\emph{6}};
\node at ( 6,6) {\emph{7}};
\node at ( 7,6) {\emph{8}};
\node at ( 8,6) {\emph{9}};
\node at ( 9,6) {\emph{10}};
\node at (10,6) {\emph{11}};
\end{tikzpicture}}
\newtheorem{theorem}{Theorem}
\newtheorem{lemma}[theorem]{Lemma}
\newtheorem{proposition}[theorem]{Proposition}
\newtheorem{observation}[theorem]{Observation}
\theoremstyle{definition}
\newtheorem{example}{Example}
\begin{document}
\title{Counting prime juggling patterns}
\author{Esther Banaian\thanks{College of St.\ Benedict, Collegeville, MN 56321, USA {\tt embanaian@csbsju.edu}} \and
Steve Butler\thanks{Iowa State University, Ames, IA 50011, USA {\tt butler@iastate.edu}} \and
Christopher Cox\thanks{Carnegie Mellon University, Pittsburgh, PA 15213, USA {\tt cocox@andrew.cmu.edu}} \and
Jeffrey Davis\thanks{University of South Carolina, Columbia, SC 29208, USA {\tt davisj56@email.sc.edu}} \and
Jacob Landgraf\thanks{Michigan State University, East Lansing, MI 48824, USA {\tt landgr10@msu.edu}} \and
Scarlitte Ponce\thanks{California State University, Monterey Bay, Seaside, CA 93955, USA {\tt scponce@csumb.edu}}}
\date{\empty}
\maketitle

\begin{abstract}
Juggling patterns can be described by a closed walk in a (directed) state graph, where each vertex (or state) is a landing pattern for the balls and directed edges connect states that can occur consecutively.  The number of such patterns of length $n$ is well known, but a long-standing problem is to count the number of prime juggling patterns (those juggling patterns corresponding to cycles in the state graph).  For the case of $b=2$ balls we give an expression for the number of prime juggling patterns of length $n$ by establishing a connection with partitions of $n$ into distinct parts.  From this we show the number of two-ball prime juggling patterns of length $n$ is $\big(\gamma-o(1)\big)2^n$ where $\gamma=1.32963879259\ldots$.  For larger $b$ we show there are at least $b^{n-1}$ prime cycles of length $n$.
\end{abstract}

\section{Introduction}
Juggling has many interesting connections with combinatorics (see \cite{BuhlerGraham,Cards,Polster}).  There are several ways to describe juggling patterns, and each description gives some information about various properties of the patterns.  One of the most useful ways to describe a juggling pattern is with state graphs.

The \emph{state graph for $b$ balls} is an infinite directed graph where the vertices (or \emph{states}) correspond to a schedule of when balls will land and directed edges join states that can occur consecutively.  In particular, a state is a $0$-$1$ vector indexed by $\mathbb{N}$ where a $1$ in position $i$ indicates that a ball will land $i$ ``beats'' in the future (the number of $1$'s in the state vector equals the number of balls $b$).  Given a state the possible transitions are as follows:
\begin{itemize}
\item If the leading entry in the state is $0$, then there is no ball scheduled to land.  Moving forward one beat, we delete the first entry, and all other entries shift down by one.
\item If the leading entry in the state is $1$, then there is a ball scheduled to land.  Moving forward one beat, we delete the first entry, all other entries shift down by one, and then put a $1$ somewhere which is currently $0$. (That is, the ball goes into the hand, time moves forward one beat, and then we ``throw'' the ball so that it lands at a time in the future that does not already have a ball scheduled to land.)
\end{itemize}
In the language of state vectors, if $\mathbf{c}=\langle c_1,c_2,\ldots\rangle$ and $\mathbf{d}=\langle d_1,d_2,\ldots\rangle$ are states in the graph, then $\mathbf{c}\to\mathbf{d}$ if and only if $d_i\ge c_{i+1}$ for all $i\ge 1$.  A portion of the state graph when $b=2$ is shown in Figure~\ref{fig:state}.  Here we have only included those states where the largest index of a nonzero entry is at most four, and we have truncated the states.  We \emph{label} the edges to indicate the type of ``throw'' that occurred.  A ``$0$'' indicates no throw was made; otherwise, ``$i$'' indicates that we moved the leading $1$ into the $i$-th slot by an appropriate throw.

\begin{figure}[htf]
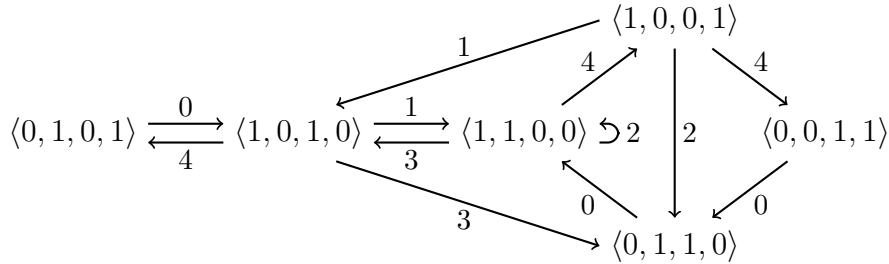
\label{stategraph}
\begin{center}
\picAAA
\caption{A subgraph of the two-ball state graph. The edge labels correspond to throw heights.}
\label{fig:state}
\end{center}
\end{figure}

Periodic juggling patterns can be found from the state graph by looking for closed walks. The length of the closed walk, which we denote by $n$, indicates the period of the corresponding pattern, i.e.\ the juggling pattern is formed by continuously repeating the walk so that the throw made at time $i$ (as indicated by the edge label) is the same as that made at time $i+n$.  The sequence of edge labels for the closed walk corresponds to the \emph{siteswap} sequence for the juggling pattern.  This is denoted by $t_1t_2\ldots t_n$ where each $t_i$ indicates the type of throw made at the $i$-th step.  Siteswap sequences are commonly used by jugglers when describing juggling patterns.  It is known that from the siteswap we can recover the closed walk in the state graph (see \cite{BuhlerGraham,Polster}).

Some of the closed walks, i.e.\ juggling patterns, from Figure~\ref{fig:state} are given in Table~\ref{table:state}.  Note that the first five listed have all of the $n$ states unique, i.e.\ the closed walks corresponds to cycles, while the last two have at least one state is repeated.  We call juggling patterns which correspond to cycles in the state graphs \emph{prime juggling patterns} (the notion of prime comes from noting every juggling pattern can be formed by appropriately combining prime juggling patterns).

\begin{table}[thb]
\centering
\[
\begin{array}{c|c}
\text{closed walk}&\text{siteswap}\\ \hline
\langle1,1,0,0\rangle{\to}\langle1,1,0,0\rangle&2\\
\langle1,0,1,0\rangle{\to}\langle0,1,0,1\rangle{\to}\langle1,0,1,0\rangle&40\\
\langle1,1,0,0\rangle{\to}\langle1,0,1,0\rangle{\to}\langle1,1,0,0\rangle&31\\
\langle1,1,0,0\rangle{\to}\langle1,0,0,1\rangle{\to}\langle0,0,1,1\rangle{\to}\langle0,1,1,0\rangle{\to}\langle1,1,0,0\rangle&4400\\
\langle1,1,0,0\rangle{\to}\langle1,0,0,1\rangle{\to}\langle1,0,1,0\rangle{\to}\langle0,1,1,0\rangle{\to}\langle1,1,0,0\rangle&4130\\
\langle1,1,0,0\rangle{\to}\langle1,0,1,0\rangle{\to}\langle0,1,0,1\rangle{\to}\langle1,0,1,0\rangle{\to}\langle1,1,0,0\rangle&3401\\
\langle0,1,0,1\rangle{\to}\langle1,0,1,0\rangle{\to}\langle0,1,1,0\rangle{\to}\langle1,1,0,0\rangle{\to}\langle1,0,1,0\rangle{\to}\langle0,1,0,1\rangle&03034
\end{array}
\]
\caption{Some of the juggling patterns from Figure~\ref{fig:state}.}
\label{table:state}
\end{table}

An exact expression for the number of juggling patterns with $b$ balls and period $n$ has been known for decades (see \cite{BuhlerGraham}), and is approximately $\big((b+1)^n-b^n\big)/n$.  The number of prime juggling patterns of period $n$ and $b$ balls, denoted $P(n,b)$, has never been determined for non-trivial values.  A small step towards counting prime juggling patterns was achieved by Chung and Graham \cite{ChungGraham} (see also \cite{ButlerGraham}) who were able to enumerate \emph{primitive} juggling patterns, i.e.\ patterns of period $n$ which do not repeat a fixed \emph{initial} state.  In this paper we will begin to address the enumeration problem of prime juggling patterns.

In Section~\ref{sec:parts} we will count the number of two-ball prime juggling patterns of period $n$ by showing a connection to the partitions of $n$ into distinct parts.  From this we will show in Section~\ref{sec:asym} that the number of two-ball prime juggling patterns of period $n$ is $\big(\gamma-o(1)\big)2^n$ for a known constant $\gamma=1.3296\ldots$.  In Section~\ref{sec:bigb} we give a lower bound of $b^{n-1}$ for the number of prime juggling patterns of length $n$ with $b$ balls.  Finally, we give some concluding remarks in Section~\ref{sec:conclusion}.

\section{Enumeration by ordered partitions}\label{sec:parts}
Our approach to counting will be to find a description of two-ball prime juggling patterns that is connected to ordered partitions.  The key observation we will use is that we can describe our patterns by how the spacings can occur in the states (i.e.\ the distance between the $1$'s in a state).

\subsection{Ternary sequences and spacings}
There is a bijection between period $n$, two-ball juggling patterns and ternary words of length $n$ using the letters $\{0,x,y\}$ with at least one occurrence of an $x$.  The letters are indicating the action on the $n$-th beat, and are interpreted as follows:
\begin{itemize}
\item $0$ indicates no ball was thrown at that beat.
\item $x$ indicates that a ball was thrown and that the next ball thrown will be the \emph{other} ball.
\item $y$ indicates that a ball was thrown and that the next ball thrown will be the \emph{same} ball.
\end{itemize}
In other words, a throw corresponding to an $x$ throws the ball so that it will land after the ball already in the air, while a throw corresponding to a $y$ throws the ball so that it will land before the ball already in the air.  We must have at least one $x$ since otherwise we will only throw at most one ball. (We note that this is related to the card interpretation of juggling patterns which we will visit in Section~\ref{sec:bigb}).

From the ternary word, we can reconstruct the sequence of throw heights, i.e.\ the siteswap.  (As noted earlier, once we have the siteswap sequence, we can find the closed walk in the state graph.)  Since a $y$ throw requires that the next ball thrown is the same ball, the throw height is the distance to the subsequent nonzero entry. On the other hand, an $x$ throw requires the next ball thrown to be different, so this ball will not land until the other ball is thrown by a throw corresponding to another $x$. Therefore, the throw height is the distance to the first nonzero entry following the subsequent $x$. These words are cyclic, so we wrap around at the ends when counting throw heights. 

\begin{example}\label{ternthrows}
We illustrate this process for $n = 11$ and the word $yx00x0y000x$. Below we have written the word in the first line, and in the second line we have indicated the throw heights (i.e.\ siteswap) corresponding to this word. 
\begin{center}
\begin{tabular}{*{11}{c}}
$y$&$x$&0&0&$x$&0&$y$&0&0&0&$x$\\
1&5&0&0&7&0&4&0&0&0&5\\
\end{tabular}
\end{center}
\end{example}

The next step is to determine when a ternary word corresponds to a prime juggling sequence.  We note that two-ball states are of the form $\langle \ldots,1,0,\ldots,0,1,\ldots \rangle$.  Since states with a leading $0$ will always result in a series of shifts until the first term is a $1$, it suffices to determine whether any states of the form
\[
\langle 1,\underbrace{0,\ldots,0}_{i},1 \rangle,
\]
are repeated, where $i$ is the number of $0$ entries between the two entries of $1$. For such a pattern, we will set the \emph{spacing} to be $i+1$. In other words, the spacing is the difference in the landing times for the two balls.  Thus, a two-ball juggling pattern is prime if and only if all the states $\langle 1,\ldots,1 \rangle$ visited in the pattern have unique spacings. Our next step is to determine the spacings from the word.

Given a ternary word corresponding to a juggling pattern, define an \emph{anchor point} as the first nonzero entry following an $x$. By our convention, an anchor point is always the result of an $x$ throw, and in particular is the \emph{second} ball in the state until the immediately preceding $x$ throw occurs. Therefore, to determine the spacing between the balls throughout the juggling pattern, we count the spacing between each nonzero entry and the \emph{subsequent} anchor point.

\begin{example}\label{ternspace}
For the ternary word in Example~\ref{ternthrows}, we place an anchor symbol (${*}$) above each anchor point, and below each nonzero entry give the spacing to the next anchor point.
\begin{center}
\begin{tabular}{*{11}{c}}
\scriptsize{${*}$}&&&&\scriptsize{${*}$}&&\scriptsize{${*}$}&&&&\\[-3pt]
$y$&$x$&0&0&$x$&0&$y$&0&0&0&$x$\\
$4$&$3$&&&$2$&&$5$&&&&$1$
\end{tabular}
\end{center}
Since these spaces are all distinct, the corresponding juggling pattern is prime.
\end{example}

For each anchor point in a word, we build a set of spacings connected to that anchor point. In Example~\ref{ternspace}, these sets are $\{4,3\}$, $\{2\}$, and $\{5,1\}$. Note that the sum of the distances between adjacent anchor points, which is also the sum of the largest entries in each set, is $n$. It is also possible to reconstruct our ternary word, and hence juggling pattern, given an ordered collection of sets of spacings.

\begin{lemma}\label{lem:reconstruct}
Given $S_1,S_2,\ldots,S_k$, where each $S_i$ is a nonempty set and the sum of the largest entries is $n$, then there is a unique cyclic ternary word of length $n$ so that the sets of spacings are $S_1,S_2,\ldots,S_k$ and for $1\le i\le k-1$, if $S_i$ is associated with a given anchor point, then $S_{i+1}$ is associated with the following anchor point.
\end{lemma}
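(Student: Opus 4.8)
The plan is to construct the word explicitly from the data $S_1,\dots,S_k$ and then show that every word realizing these sets must coincide with it. Both directions rest on one structural fact that I would isolate first: \emph{in any cyclic word realizing the sets $S_1,\dots,S_k$ (with anchor points in the prescribed cyclic order), the largest element of $S_i$ equals the distance from the anchor point carrying $S_{i-1}$ to the anchor point carrying $S_i$} (indices read cyclically). Indeed, the anchor point carrying $S_{i-1}$, call it $a_{i-1}$, is itself a nonzero entry whose subsequent anchor is $a_i$, so its spacing to $a_i$ lies in $S_i$; every other nonzero entry contributing to $S_i$ lies strictly between $a_{i-1}$ and $a_i$ and is therefore strictly closer to $a_i$, so $a_{i-1}$ contributes the maximum. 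Consequently the sum of the largest elements is the total cyclic distance around the anchor points, namely $n$, which matches the hypothesis; conversely, any placement of $k$ anchor points with successive gaps $\max(S_1),\dots,\max(S_k)$ fits consistently around a cyclic word of length $n$ precisely because these numbers sum to $n$.

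With the anchor positions $a_1,\dots,a_k$ fixed (the gap from $a_{i-1}$ to $a_i$ being $\max(S_i)$), I would next pin down the remaining nonzero entries and then the letters, which simultaneously proves uniqueness. By the discussion preceding the lemma, $S_i$ is exactly the set of spacings from the nonzero entries in the half-open arc $[a_{i-1},a_i)$ to $a_i$; hence the nonzero entries of that arc must be precisely the positions $a_i-s$ for $s\in S_i$, with every other position of the arc equal to $0$. These arcs partition the cycle, and within an arc the positions $a_i-s$ are distinct since $S_i$ is a set, so the whole $0$/nonzero pattern is forced. For the letters: an anchor point is by definition the first nonzero entry following an $x$, so the $x$ producing $a_i$ must be the last nonzero entry before $a_i$, i.e.\ the entry at position $a_i-\min(S_i)$; conversely, if a nonzero entry $q$ were an $x$, then the first nonzero entry after $q$ would be an anchor, so a nonzero entry that is not the last one before some anchor cannot be an $x$. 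Therefore the entry at $a_i-\min(S_i)$ is an $x$ and every other nonzero entry is a $y$, and the word is completely determined.

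For existence I would simply run this construction — anchors at the prescribed gaps, nonzeros at $a_i-s$ for $s\in S_i$, the entry $a_i-\min(S_i)$ labeled $x$ and all other nonzeros labeled $y$ — and verify the routine points: each of the $k$ arcs contributes exactly one $x$ (so there is at least one $x$, as a juggling word requires); the positions that are ``the first nonzero after an $x$'' are exactly $a_1,\dots,a_k$, since the $x$ in arc $i$ has no nonzero strictly between it and $a_i$ and every $x$ lies in some arc; and reading off spacings, the set attached to $a_i$ is $\{\,a_i-(a_i-s):s\in S_i\,\}=S_i$, with $a_{i+1}$ the anchor following $a_i$ by construction. The main obstacle, and the only step that needs genuine care, is getting the $x$/$y$ rule exactly right and checking it is simultaneously \emph{necessary} (no other labeling of the forced nonzero pattern produces the right anchor set) and \emph{sufficient} (the constructed labeling does make each $a_i$, and nothing else, an anchor). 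Everything else is bookkeeping with cyclic arcs, and the one global constraint $\sum_i\max(S_i)=n$ is exactly what guarantees the anchor positions are consistent around the cycle.
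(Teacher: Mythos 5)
Your proof is correct and builds exactly the same word as the paper's: anchor points separated cyclically by the maxima of the $S_i$ (consistent because these sum to $n$), the remaining nonzero entries at distances given by the other elements of $S_i$ from the corresponding anchor, an $x$ at distance $\min(S_i)$ and $y$'s elsewhere --- the paper simply runs this same construction as an explicit algorithm working backwards from the end of the word. The one thing you add is the explicit necessity argument (that any word realizing the given sets of spacings is forced to have this form), which is what actually establishes the uniqueness claim; the paper's proof only verifies that its construction works and leaves uniqueness implicit.
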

\begin{proof}
We form the ternary word in reverse by applying the following algorithm.
\begin{itemize}
\item Start with a word of length $n+1$ where the first $n$ entries are $0$ and the last entry is an active point, temporarily marked ``?''.
\item For the sets $S_k,S_{k-1},\ldots,S_1$ (in that order) repeat the following action for the set $S_i$, 
\begin{itemize}
\item Suppose that $S_i=\{s_1,s_2,\ldots,s_t\}$ and that $s_1<s_2<\cdots<s_t$. In the entry precisely $s_1$ entries to the left of the active point, replace the $0$ with an ``$x$''.  For $j=s_2,\ldots,s_t$, in the entry precisely $j$ entries to the left of the active point, replace the $0$ with a ``$y$''.
\item Change the active point to be the furthest left nonzero entry (equivalently, the entry formed using element $s_t$ in $S_i$).
\end{itemize}
\item Delete the ``?'' in the last entry.
\end{itemize}

Since each set is nonempty and we place an $x$ immediately to the left of our active point, then the active points used in the construction will give the anchor points of the resulting ternary word.  Further, by the construction for each anchor point we will produce spacings that corresponded precisely to the set $S_i$ used in the placements, and since we worked backwards when constructing the sets of orderings, we will have that $S_i$ comes immediately before $S_{i+1}$.  Finally, we note that since the sum of the largest entries in the $S_i$ is $n$, then we will place an anchor point in the first entry, which by wraparound is the same as placing a point in entry $n+1$. This justifies deleting the ``?'' at the end.  (Note that an anchor point could be either an $x$ or $y$ which is why we did not initially specify the entry).
\end{proof}


\subsection{Ordered partitions}
As we have already noted, the largest entries of the sets of spacings form a partition of $n$.  If we restrict to considering prime juggling patterns, then the spacings are distinct, and the largest entries of the sets of spacings form a partition of $n$ into \emph{distinct} parts.

Starting with a partition of $n$ into distinct parts, we consider the number of ways to form sets of spacings that give a prime juggling pattern (i.e.\ number of ways to add possible additional elements while ensuring that there are no repeats).  

\begin{example}\label{Parts}
For the partition $2+7+11$ of $20$, we consider the number of ways to form a set of spacings that correspond to a prime juggling pattern with largest parts $2$, $7$, and $11$.  In particular, we consider the Ferrer's diagram for the partition.
\begin{center}
\picQQQ
\end{center}
We have that the numbers $2$, $7$ and $11$ are the largest elements of the sets, and so it remains to determine what happens with the other values.  The number under each column indicates how many options we have for a particular value (i.e.\ we can include it in any subset whose largest part is greater than the value \emph{or} we can include it in none of the sets).  Since the choices for the values are independent, the number of possible sets of spacings is  $4\cdot3^4\cdot2^3$.
\end{example}

\begin{theorem}\label{thm:parts}
Let $P(n,2)$ be the number of two-ball prime juggling patterns of period $n$.  Then 
\[
P(n,2)=\sum_{t}\bigg(\sum_{\substack{p_1>\cdots>p_t \ge 1 \\ p_1+\cdots+p_t=n}}\frac{1}{t(t+1)}\prod_{i=1}^t\bigg(\frac{i+1}{i}\bigg)^{p_i}\bigg).
\]
\end{theorem}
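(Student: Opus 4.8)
The plan is to chain the correspondences already set up in this section and then carry out a short computation. First recall, from the discussion preceding Lemma~\ref{lem:reconstruct}, that a two-ball pattern of period $n$ is prime exactly when the states $\langle 1,\dots,1\rangle$ it visits all have distinct spacings; since each nonzero entry of the associated ternary word contributes exactly one spacing (its distance to the subsequent anchor point), and the spacings within a single anchor block are automatically distinct, this says precisely that the sets $S_1,\dots,S_k$ attached to the $k$ anchor points are pairwise disjoint. Feeding this into Lemma~\ref{lem:reconstruct}, together with the converse direction (recover a word's sets of spacings by reading, at each anchor point, the distances back to the preceding nonzero entries), prime two-ball patterns of period $n$ correspond bijectively to \emph{cyclic} tuples $(S_1,\dots,S_k)$ of pairwise disjoint nonempty sets of positive integers with $\max S_1+\dots+\max S_k=n$. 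Here ``cyclic'' is forced because the word produced in Lemma~\ref{lem:reconstruct} depends only on the cyclic order of the $S_i$; and since the $S_i$ have distinct maxima they are distinct sets, so an unordered collection of $k$ such sets gives rise to exactly $(k-1)!$ distinct cyclic orders, each yielding a distinct pattern.

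Next I would sort this count by the partition it produces. The maxima $\max S_i$ are distinct positive integers summing to $n$, so listing them in decreasing order $p_1>p_2>\dots>p_t\ge 1$ gives a partition of $n$ into $t$ distinct parts, and (taking $S_i=\{p_i\}$) every such partition arises. Thus it suffices to count, for a fixed partition $p_1>\dots>p_t$, the number $N$ of prime cyclic tuples with $\{\max S_i\}=\{p_1,\dots,p_t\}$, and then sum over all partitions. By the previous paragraph $N=(t-1)!\cdot M$, where $M$ is the number of unordered collections $\{S_1,\dots,S_t\}$ of pairwise disjoint sets with $\max S_i=p_i$. Generalizing Example~\ref{Parts}: put $p_{t+1}=0$; a value $v$ with $p_{i+1}<v<p_i$ may be placed into any one of the $i$ sets whose maximum exceeds $v$ (namely those with maximum among $p_1,\dots,p_i$) or into none of them, for $i+1$ independent choices, and there are $p_i-p_{i+1}-1$ such values for each $i$. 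Hence $M=\prod_{i=1}^t(i+1)^{p_i-p_{i+1}-1}$, so $N=(t-1)!\prod_{i=1}^t(i+1)^{p_i-p_{i+1}-1}$. (When $t=1$ this reads $N=2^{n-1}$, and a gap $p_i-p_{i+1}=1$ simply contributes a factor $(i+1)^0=1$.)

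It remains to simplify $N$. The exponents $-1$ pull out of the product as $\big(\prod_{i=1}^t(i+1)\big)^{-1}=1/(t+1)!$, and the leftover product telescopes: in $\prod_{i=1}^t(i+1)^{p_i-p_{i+1}}$, for $2\le i\le t$ the exponent $p_i$ occurs as $(i+1)^{p_i}$ in the $i$-th factor and as $i^{-p_i}$ in the $(i-1)$-st factor, while $p_1$ occurs only as $2^{p_1}$ and $p_{t+1}=0$ contributes nothing; so $\prod_{i=1}^t(i+1)^{p_i-p_{i+1}}=\prod_{i=1}^t\big(\tfrac{i+1}{i}\big)^{p_i}$. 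Therefore
\[
N=\frac{(t-1)!}{(t+1)!}\prod_{i=1}^t\Big(\frac{i+1}{i}\Big)^{p_i}=\frac{1}{t(t+1)}\prod_{i=1}^t\Big(\frac{i+1}{i}\Big)^{p_i},
\]
and summing over all $t\ge 1$ and all partitions $p_1>\dots>p_t\ge1$ of $n$ yields the stated formula for $P(n,2)$.

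The step I expect to require the most care is the bijective bookkeeping of the first paragraph: isolating that it is cyclic tuples (not linear ones) that match prime patterns, checking that distinct cyclic orders of the same collection of sets genuinely produce distinct patterns (this is exactly where one needs the construction in Lemma~\ref{lem:reconstruct} to be invertible), and confirming that the degenerate instances ($t=1$, empty gaps) are handled correctly. Everything after that — counting $M$ by a product rule as in Example~\ref{Parts}, and the telescoping — is routine.
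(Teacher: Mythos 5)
Your proof is correct and takes essentially the same approach as the paper: primality is characterized as the sets of spacings being pairwise disjoint, the reconstruction lemma gives the bijection with cyclic tuples of such sets, the count per partition is the same product over values in the Ferrer's diagram (your $\prod_{i=1}^t(i+1)^{p_i-p_{i+1}-1}$ is the paper's $i^{p_{i-1}-p_i-1}$ reindexed), and the $(t-1)!$ cyclic orderings and telescoping simplification match. The only difference is presentational — you make the bijection with unordered collections and cyclic orders slightly more explicit than the paper does.
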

\begin{proof}
We have already noted that a juggling pattern is prime if and only if no spacings are repeated.  We also know that given a set of spacings, together with a relative ordering of the sets, we can form a juggling pattern.  So it suffices to count the number of ways to form the sets of spacings with no repeated elements, keeping in mind that we must also keep track of the cyclic orderings of these sets.

In particular, given $n=p_1+p_2+\cdots+p_t$ with $p_1>p_2>\cdots>p_t\ge 1$, i.e.\ a partition of $n$ into distinct parts, the number of ways to form a set of spacings where the largest elements come from these parts is
\begin{equation}\label{eq:part}
\frac{(t+1)^{p_t}t^{p_{t-1}-p_{t}}(t-1)^{p_{t-2}-p_{t-1}}\cdots 2^{p_1-p_2}}{(t+1)t(t-1)\cdots 2}=
\frac{\displaystyle\prod_{i=1}^t\bigg(\frac{i+1}{i}\bigg)^{p_i}}{(t+1)!}.
\end{equation}
To see this, let $p_{t+1}=0$ and then for $2\le i\le t+1$ we have $p_{i-1}$ columns in the partition diagram that have at least $i-1$ elements, and similarly $p_i$ columns in the partition diagram that have at least $i$ elements.  In particular, there are exactly $p_{i-1}-p_i$ columns in the partition diagram that have exactly $i-1$ elements.  Each of these columns have $i$ different options for what happens to that value, except for the value $p_{i-1}$ which is already used as one of the largest elements in the sets of spacings.  Therefore these columns contribute $i^{p_{i-1}-p_i-1}$, giving \eqref{eq:part}.

We now also need to account for the cyclic orderings of the sets. Given $t$ sets, there are $(t-1)!$ such orderings. We can conclude that, for the given partition, we have
\[
\frac{1}{t(t+1)}\prod_{i=1}^t\bigg(\frac{i+1}{i}\bigg)^{p_i}
\]
different prime juggling patterns.  We now sum over all partitions with a fixed number of parts, and similarly sum over all possible number of parts to get the result.
\end{proof}

\section{Asymptotics}\label{sec:asym}
From Theorem~\ref{thm:parts} we can find the number of prime juggling patterns for any $n$.  In Table~\ref{table:2prime} we give these values for $n\le 30$.  By examining the data it appears that we are approximately doubling at each step.  In this section we will show that this reflects the behavior of these numbers.  In particular, we will establish the following.

\begin{theorem}\label{thm:asym}
We have $P(n,2) = \big(\gamma - o(1)\big)2^n$, where
\begin{equation*} 
\gamma = \frac{1}{2} + {1\over 2}\sum_{t\geq 2}\bigg(\prod_{i=2}^t{i-1\over 2^i-i-1}\bigg) = 1.3296387925905428331319\ldots
 \end{equation*}
\end{theorem}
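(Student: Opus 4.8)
The plan is to pull the $2^n$ growth straight out of the formula in Theorem~\ref{thm:parts}. Write a partition of $n$ into distinct parts as $p_1>p_2>\cdots>p_t\ge1$; since $p_1=n-(p_2+\cdots+p_t)$,
\[
\prod_{i=1}^t\bigg(\frac{i+1}{i}\bigg)^{p_i}=2^{p_1}\prod_{i=2}^t\bigg(\frac{i+1}{i}\bigg)^{p_i}=2^n\prod_{i=2}^t\bigg(\frac{i+1}{2i}\bigg)^{p_i},
\]
so $P(n,2)=2^n\big(\tfrac12+R(n)\big)$, where the term $t=1$ (the partition $n=n$) contributes exactly $2^n/2$ and, setting $c_i:=\tfrac{i+1}{2i}$,
\[
R(n)=\sum_{t\ge2}\frac{1}{t(t+1)}A_t(n),\qquad A_t(n)=\sum_{\substack{p_1>\cdots>p_t\ge1\\ p_1+\cdots+p_t=n}}\ \prod_{i=2}^t c_i^{p_i}.
\]
The summand depends only on $p_2,\dots,p_t$, and $c_i<1$ for $i\ge2$; discarding the constraints (which only pin $p_1=n-\sum_{i\ge2}p_i$ and force $p_1>p_2$) merely enlarges the sum, so $A_t(n)\le A_t(\infty):=\sum_{p_2>\cdots>p_t\ge1}\prod_{i=2}^t c_i^{p_i}<\infty$ for every $n$.

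Second, I would compute $A_t(\infty)$ by summing the geometric series one index at a time, starting from $p_2$: the identity $\sum_{p_2\ge p_3+1}c_2^{p_2}=\frac{c_2}{1-c_2}c_2^{p_3}$ merges $c_2^{p_3}$ into $c_3^{p_3}$, and iterating with $d_j:=c_2c_3\cdots c_j$ gives $A_t(\infty)=\prod_{j=2}^t\frac{d_j}{1-d_j}$. The products telescope, $d_j=\frac{j+1}{2^j}$, so $\frac{d_j}{1-d_j}=\frac{j+1}{2^j-j-1}$. Using $\prod_{j=2}^t(j+1)=(t+1)!/2$ and $\prod_{i=2}^t(i-1)=(t-1)!$, a short rearrangement yields
\[
b_t:=\frac{1}{t(t+1)}A_t(\infty)=\frac{1}{t(t+1)}\prod_{j=2}^t\frac{j+1}{2^j-j-1}=\frac12\prod_{i=2}^t\frac{i-1}{2^i-i-1},
\]
and $\sum_{t\ge2}b_t$ converges (even super-exponentially, since $2^i-i-1\sim2^i$).

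Third, I would let $n\to\infty$. For fixed $t$, a tuple $(p_2,\dots,p_t)$ with $p_2>\cdots>p_t\ge1$ contributes to $A_t(n)$ precisely when $2p_2+p_3+\cdots+p_t<n$, so these index sets increase to the full set and $A_t(n)\uparrow A_t(\infty)$ by monotone convergence. For each $n$ the sum $R(n)$ has only finitely many nonzero terms (those $t$ with $\binom{t}{2}<n$), each dominated by the $n$-independent, summable $b_t$; so dominated convergence for series lets me interchange $\lim_{n\to\infty}$ with $\sum_{t\ge2}$, giving $R(n)\to\sum_{t\ge2}b_t$. Hence $P(n,2)/2^n\to\tfrac12+\sum_{t\ge2}b_t=\gamma$, i.e.\ $P(n,2)=\big(\gamma-o(1)\big)2^n$.

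The hardest part is not a single deep step but keeping the two interchanges honest: one must verify both $A_t(n)\le A_t(\infty)$ and $\sum_t b_t<\infty$ to license moving the $n$-limit past the sum over the (a priori unbounded) number of parts, and one must run the iterated geometric summation carefully enough to see the telescoping $d_j=(j+1)/2^j$ and the exact cancellation converting $\frac{1}{t(t+1)}\prod_{j=2}^t\frac{j+1}{2^j-j-1}$ into $\frac12\prod_{i=2}^t\frac{i-1}{2^i-i-1}$. That algebraic identity, together with correctly isolating the $t=1$ term that supplies the leading $\tfrac12$ of $\gamma$, is where a slip would be easiest to make.
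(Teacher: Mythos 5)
Your proof is correct, and it takes a genuinely different route from the paper. The paper first derives a recurrence $c_t(n)=\sum_k(t-1)(t+1)^{k-1}c_{t-1}(n-kt)$ for the contribution of partitions into exactly $t$ parts, then proves by induction on $t$ an upper bound $c_t(n)\le q_t2^n$ and a lower bound $c_t(n)\ge q_t2^n-r_t\sqrt3^{\,n}$ with explicitly defined constants $q_t,r_t$, and assembles the asymptotic from these two one-sided estimates. You instead normalize by $2^n$ directly inside the sum of Theorem~\ref{thm:parts} using $p_1=n-(p_2+\cdots+p_t)$, observe that each normalized contribution $A_t(n)$ is monotonically increasing in $n$ and bounded by the $n$-independent iterated geometric sum $A_t(\infty)$, evaluate $A_t(\infty)$ in closed form via the telescoping $d_j=(j+1)/2^j$, and pass to the limit by dominated convergence for series. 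Your closed form $b_t=\frac12\prod_{i=2}^t\frac{i-1}{2^i-i-1}$ agrees with the paper's $q_t$, your bound $A_t(n)\le A_t(\infty)$ reproves the upper bound $P(n,2)\le\gamma2^n$ (so the $o(1)$ is indeed one-sided, as the statement's ``$\gamma-o(1)$'' requires), and the two interchanges you flag are both justified as you describe. What your softer argument gives up is quantitative control: the paper's induction yields an explicit $O(\sqrt3^{\,n})$ error for each fixed $t$ (and hence, after truncating at $m$ terms, an explicit rate), whereas monotone plus dominated convergence yields only an unquantified $o(1)$. What it buys is brevity and the avoidance of the recurrence of Proposition~\ref{prop:recur} and the bookkeeping of the constants $r_t$.
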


\begin{table}[th]
\centering
\begin{tabular}{|r|r|}\hline
$n$&$P(n,2)$\\ \hline
 $1$&   $1$\\ \hline
 $2$&   $2$\\ \hline
 $3$&   $5$\\ \hline
 $4$&   $10$\\ \hline
 $5$&   $23$\\ \hline
 $6$&   $48$\\ \hline
 $7$&   $105$\\ \hline
 $8$&   $216$\\ \hline
 $9$&   $467$\\ \hline
$10$&   $958$\\ \hline
\end{tabular}
\hfil\hfil
\begin{tabular}{|r|r|}\hline
$n$&$P(n,2)$\\ \hline
$11$ & $2021$\\\hline
$12$ & $4146$\\\hline
$13$ & $8631$\\\hline
$14$ & $17604$\\\hline
$15$ & $36377$\\\hline
$16$ & $73876$\\\hline
$17$ & $151379$\\\hline
$18$ & $306822$\\\hline
$19$ & $625149$\\\hline
$20$ & $1263294$\\\hline
\end{tabular}
\hfil\hfil
\begin{tabular}{|r|r|}\hline
$n$&$P(n,2)$\\ \hline
$21$&  $2563895$\\ \hline
$22$&  $5169544$\\ \hline
$23$&  $10454105$\\ \hline
$24$&  $21046800$\\ \hline
$25$&  $42451179$\\ \hline
$26$&  $85334982$\\ \hline
$27$&  $171799853$\\ \hline
$28$&  $344952010$\\ \hline
$29$&  $693368423$\\ \hline
$30$&  $1391049900$\\ \hline
\end{tabular}
\caption{$P(n,2)$ for $1\le n\le 30$.}
\label{table:2prime}
\end{table}

\subsection{Upper bound}
If we let $c_t(n)$ be the number of ways to place spacings in the partitions of $n$ into exactly $t$ distinct parts, then by Theorem~\ref{thm:parts} we have
\begin{equation}\label{eq:fixedt}
c_t(n) = \sum_{\substack{p_1>\cdots>p_t \ge 1 \\ p_1+\cdots+p_t=n}}
\frac{1}{t(t+1)}\prod_{i=1}^t\bigg(\frac{i+1}{i}\bigg)^{p_i}.
\end{equation}

\begin{proposition}\label{prop:recur}
The values $c_t(n)$ satisfy the recurrence,
\[
c_t(n) = \sum_{k=1}^{\big\lfloor\frac{n-{t\choose2}}{t}\big\rfloor} (t-1)(t+1)^{k-1}c_{t-1}(n-kt) 
\]
\end{proposition}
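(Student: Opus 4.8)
The plan is to classify the partitions of $n$ into $t$ distinct parts by their \emph{smallest} part, peel that part off, and observe that the weight attached to a partition in \eqref{eq:fixedt} transforms by a clean explicit factor under this operation. Throughout we take $t\ge 2$; for $t=1$ the right-hand side is empty while $c_1(n)=2^{n-1}$, so the formula is to be read as the recursion step, with $c_1$ as the base case.

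First I would set up the correspondence. Given a partition $p_1>p_2>\cdots>p_t\ge 1$ of $n$, let $k=p_t$ be its smallest part, and put $q_i=p_i-k$ for $1\le i\le t-1$. Since $p_{t-1}>p_t=k$ we get $q_1>q_2>\cdots>q_{t-1}\ge 1$, and $\sum_{i=1}^{t-1}q_i=(n-k)-(t-1)k=n-kt$; conversely every partition of $n-kt$ into $t-1$ distinct parts arises this way from a unique such $p$. So for each admissible $k$ this is a bijection. The range of $k$ is $1\le k\le\big\lfloor(n-\binom{t}{2})/t\big\rfloor$: indeed a partition of $n-kt$ into $t-1$ distinct parts exists iff $n-kt\ge 1+2+\cdots+(t-1)=\binom{t}{2}$, i.e.\ iff $kt\le n-\binom{t}{2}$.

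Next I would track the weight. The point is that the product in \eqref{eq:fixedt} is built so that the telescoping identity $\prod_{i=1}^{m}\frac{i+1}{i}=m+1$ applies, giving
\[
\prod_{i=1}^{t}\Big(\tfrac{i+1}{i}\Big)^{p_i}=\Big(\tfrac{t+1}{t}\Big)^{k}\prod_{i=1}^{t-1}\Big(\tfrac{i+1}{i}\Big)^{q_i+k}=\Big(\tfrac{t+1}{t}\Big)^{k}t^{k}\prod_{i=1}^{t-1}\Big(\tfrac{i+1}{i}\Big)^{q_i}=(t+1)^{k}\prod_{i=1}^{t-1}\Big(\tfrac{i+1}{i}\Big)^{q_i}.
\]
Dividing by $t(t+1)$ and comparing with the weight $\frac{1}{(t-1)t}\prod_{i=1}^{t-1}(\frac{i+1}{i})^{q_i}$ that $q$ contributes to $c_{t-1}(n-kt)$, one finds the weight of $p$ equals exactly $(t-1)(t+1)^{k-1}$ times the weight of $q$. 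Summing \eqref{eq:fixedt} over all partitions of $n$ into $t$ distinct parts, split according to $k=p_t$ and identifying the inner sum with $c_{t-1}(n-kt)$, yields the stated recurrence.

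I don't expect a genuine obstacle here: the content is the telescoping constant $(t+1)^k$ (which is precisely why the product form of $c_t$ is convenient) together with getting the summation range to come out as $\lfloor(n-\binom{t}{2})/t\rfloor$ rather than something off by one. Both are short computations, and the rest is bookkeeping.
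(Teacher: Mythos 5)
Your proposal is correct and follows essentially the same route as the paper: group the partitions by their smallest part $k=p_t$, use the telescoping product $\prod_{i=1}^{m}\frac{i+1}{i}=m+1$ to extract the factor $(t+1)^{k}$, shift the remaining parts down by $k$ to land on partitions of $n-kt$ into $t-1$ distinct parts, and identify the inner sum with $c_{t-1}(n-kt)$. Your explicit bijection bookkeeping, the derivation of the summation range from $n-kt\ge\binom{t}{2}$, and the remark that the recurrence is only meant for $t\ge2$ all match (or slightly sharpen) what the paper does.
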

\begin{proof}
Starting with \eqref{eq:fixedt} we have
\begin{align*}
c_t(n)&=\sum_{\substack{p_1>\cdots>p_t\ge1\\p_1+\cdots+p_t=n}}\frac1{t(t+1)}\prod_{i=1}^t\bigg(\frac{i+1}i\bigg)^{p_i}\\
&=\sum_{\substack{p_1>\cdots>p_t\ge1\\p_1+\cdots+p_t=n}}\frac{(t+1)^{p_t}}{t(t+1)}\prod_{i=1}^{t-1}\bigg(\frac{i+1}i\bigg)^{p_i-p_t}\\
&=\sum_{k=1}^{\big\lfloor\frac{n-{t\choose2}}{t}\big\rfloor}(t-1)(t+1)^{k-1}\sum_{\substack{p_1>\cdots>p_t=k\\p_1+\cdots+p_t=n}}\frac1{t(t-1)}\prod_{i=1}^{t-1}\bigg(\frac{i+1}i\bigg)^{p_i-k}\\
&=\sum_{k=1}^{\big\lfloor\frac{n-{t\choose2}}{t}\big\rfloor}(t-1)(t+1)^{k-1}\sum_{\substack{p_1'>\cdots>p_{t-1}'\ge1\\p_1'+\cdots+p_{t-1}'=n}}\frac1{t(t-1)}\prod_{i=1}^{t-1}\bigg(\frac{i+1}i\bigg)^{p_i'}\\
&=\sum_{k=1}^{\big\lfloor\frac{n-{t\choose2}}{t}\big\rfloor}(t-1)(t+1)^{k-1}c_{t-1}(n-kt).
\end{align*}
In going from the first to the second line we pull out $(t+1)^{p_t}$ (using that we have a telescoping product).  In going from the second line to the third line we group by the size of $p_t$, calling this parameter $k$, and note that since the parts are distinct we must have that $kt+{t\choose 2}\le n$ (i.e.\ our partition contains at least a $t\times k$ block and a triangle on the first $t-1$ entries).  In going from the third line to the fourth line we drop the size of each part by $k$ and now have a partition of $n-kt$ using $t-1$ parts.  Finally, in going from the fourth line to the fifth line we note that we now have the definition of $c_{t-1}(n-kt)$ inside of the sum.
\end{proof}

Since we have that $P(n,2)=\sum_tc_t(n)$, we can work on bounding the size of each $c_t(n)$.  This is achieved by the next result.

\begin{proposition}\label{prop:bound}
There exist constants $q_t$ so that $c_t(n) \le q_t2^n$ where $q_1 = \frac{1}{2}$ and for all $t \ge 2$,
\[
   q_t=\bigg({t-1\over 2^t-t-1}\bigg) q_{t-1}.
\]
\end{proposition}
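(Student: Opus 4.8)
The plan is to prove the bound by induction on $t$, combining the recurrence of Proposition~\ref{prop:recur} with a geometric-series estimate.

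First I would dispose of the base case $t=1$. The only partition of $n$ into exactly one distinct part is $n$ itself, so \eqref{eq:fixedt} gives $c_1(n)=\frac{1}{1\cdot 2}\big(\tfrac{2}{1}\big)^{n}=\tfrac12\cdot 2^{n}$; thus $c_1(n)\le q_1 2^{n}$ holds (in fact with equality) for $q_1=\tfrac12$, matching the claimed value.

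For the inductive step I would assume $c_{t-1}(m)\le q_{t-1}2^{m}$ for all $m\ge 1$, apply Proposition~\ref{prop:recur}, observe that every term of the sum over $k$ is nonnegative so the finite sum is at most the corresponding infinite sum, and then invoke the inductive hypothesis:
\[
c_t(n)=\sum_{k=1}^{\lfloor (n-\binom{t}{2})/t\rfloor}(t-1)(t+1)^{k-1}c_{t-1}(n-kt)\le (t-1)q_{t-1}\sum_{k\ge1}(t+1)^{k-1}2^{\,n-kt}.
\]
Rewriting $(t+1)^{k-1}2^{-kt}=\frac{1}{t+1}\big((t+1)/2^{t}\big)^{k}$ and summing the geometric series (legitimate since $(t+1)/2^{t}<1$ for every $t\ge2$) turns the right-hand side into $(t-1)q_{t-1}\cdot\frac{2^{n}}{t+1}\cdot\frac{t+1}{2^{t}-t-1}=\frac{t-1}{2^{t}-t-1}\,q_{t-1}2^{n}=q_t2^{n}$, which is exactly the asserted inequality together with the stated recursion for $q_t$.

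The only point needing any care is the convergence of that geometric series, i.e.\ the inequality $2^{t}>t+1$ for $t\ge2$; this is immediate (it holds at $t=2$ since $4>3$, and the left side grows faster thereafter), and it is also what makes each $q_t$ well-defined and positive. Everything else is routine bookkeeping with the recurrence, so I do not expect a genuine obstacle in this proposition itself; the substantive work of the section lies in the matching lower bound and in summing the $q_t$ to identify the constant $\gamma$ in Theorem~\ref{thm:asym}.
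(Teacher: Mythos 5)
Your proof is correct and follows essentially the same route as the paper's: verify $c_1(n)=\tfrac12 2^n$ directly, then apply Proposition~\ref{prop:recur}, extend the finite sum to the infinite geometric series $\sum_{k\ge1}\big(\tfrac{t+1}{2^t}\big)^k$, and sum it to obtain the recursion for $q_t$. The only (welcome) addition is your explicit note that $2^t>t+1$ for $t\ge2$, which the paper leaves implicit.
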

\begin{proof}
Putting $t=1$ into \eqref{eq:fixedt} we have $c_1(n)=\frac122^n$, establishing $q_1=\frac12$.  Now assuming by induction we have established that $c_{t-1}(n)\le q_{t-1}2^n$, then by Proposition~\ref{prop:recur} we have
\begin{align*}
c_t(n)&=\sum_{k=1}^{\big\lfloor{n-{t\choose2}\over t}\big\rfloor}(t-1)(t+1)^{k-1}c_{t-1}(n-kt)
\le \sum_{k=1}^{\big\lfloor{n-{t\choose2}\over t}\big\rfloor}(t-1)(t+1)^{k-1}q_{t-1}2^{n-kt}\\
&\le \frac{(t-1)2^n}{t+1} q_{t-1}\sum_{k\ge1}\bigg({t+1\over 2^t}\bigg)^{k}
= \frac{(t-1)2^n}{t+1} q_{t-1}\frac{{t+1\over2^t}}{1-{t+1\over 2^t}}\\
&=\underbrace{\bigg(\frac{t-1}{2^t-t-1}\bigg) q_{t-1}}_{=q_t} 2^n.\qedhere
\end{align*}
\end{proof}

From the preceding proposition we can conclude for $t\ge2$ that
\[
q_t=\frac12\prod_{i=2}^t\frac{i-1}{2^i-i-1},
\]
in particular we have that $\gamma=\sum_{t\ge 1}q_t$.

We now have everything we need for the upper bound.

\begin{lemma}\label{lem:upper}
We have $P(n,2)\le \gamma 2^n$.
\end{lemma}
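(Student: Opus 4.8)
The plan is simply to assemble the pieces already in place. For each fixed $n$, every two-ball prime juggling pattern of period $n$ comes from a partition of $n$ into some number $t$ of distinct parts, and a partition of $n$ has at most $O(\sqrt n)$ distinct parts; so
\[
P(n,2)=\sum_{t\ge1}c_t(n),
\]
a sum with only finitely many nonzero terms.

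First I would invoke Proposition~\ref{prop:bound} to bound each summand individually: $c_t(n)\le q_t2^n$ for every $t\ge1$. Since all the $c_t(n)$ and all the $q_t$ are nonnegative, the finite sum is bounded by the full series,
\[
P(n,2)=\sum_{t\ge1}c_t(n)\le\sum_{t\ge1}q_t2^n=2^n\sum_{t\ge1}q_t.
\]
Next I would identify $\sum_{t\ge1}q_t$ with $\gamma$. This is exactly the closed form recorded just before the lemma: $q_1=\frac12$ and $q_t=\frac12\prod_{i=2}^t\frac{i-1}{2^i-i-1}$ for $t\ge2$, so
\[
\sum_{t\ge1}q_t=\frac12+\frac12\sum_{t\ge2}\prod_{i=2}^t\frac{i-1}{2^i-i-1}=\gamma
\]
by the definition in Theorem~\ref{thm:asym}. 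One should note in passing that this series converges (in fact extremely fast, since the ratio $\frac{t-1}{2^t-t-1}\to0$), so $\gamma$ is a well-defined finite constant and the claimed bound $P(n,2)\le\gamma2^n$ follows immediately.

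There is essentially no obstacle here: the lemma is a bookkeeping corollary of Proposition~\ref{prop:bound}. The only point needing a word of care is the passage from the finite sum over $t$ (which is all that is meaningful for a given $n$) to the infinite series $\sum_{t\ge1}q_t$; this is legitimate precisely because every term is nonnegative, so removing the upper cutoff can only increase the sum. The genuinely substantive work — the recurrence of Proposition~\ref{prop:recur} and the geometric-series estimate in the proof of Proposition~\ref{prop:bound} — has already been carried out, so the present argument is just the final tally.
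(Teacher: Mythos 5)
Your proof is correct and is essentially identical to the paper's: both apply Proposition~\ref{prop:bound} termwise to $P(n,2)=\sum_{t\ge1}c_t(n)$ and identify $\sum_{t\ge1}q_t$ with $\gamma$. The extra remarks about the sum being finite for each fixed $n$ and the convergence of the series are harmless clarifications, not a different argument.
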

\begin{proof}
Using Proposition~\ref{prop:bound} we have
\begin{align*}
P(n,2)&=\sum_{t\ge 1}c_t(n)\le \sum_{t\ge 1}q_t2^n=\bigg(\frac12+\frac12\sum_{t\ge 2}\prod_{i=2}^t\frac{i-1}{2^i-i-1}\bigg)2^n=\gamma 2^n.\qedhere
\end{align*}
\end{proof}

\subsection{Lower Bound}
The key in establishing the upper bound is in Proposition~\ref{prop:recur} to give an upper bound on each individual $c_t(n)$ in Proposition~\ref{prop:bound}.  We will use a similar approach for establishing the lower bound.

\begin{proposition}\label{prop:lower}
There exist constants $q_t$ and $r_t$ so that $c_t(n) \ge q_t2^n-r_t{\sqrt3}^n$ where $q_1 = q_2= \frac{1}{2}$, $r_1=0$, $r_2=\frac{4\sqrt3}9$ and for all $t \ge 3$,
\[
   q_t=\bigg({t-1\over 2^t-t-1}\bigg) q_{t-1}\text{ and }
   r_t=\frac{t-1}{\sqrt3^t-t-1}r_{t-1}+2\bigg({2^t\over t+1}\bigg)^{(t-1)/2}q_{t-1}.
\]
\end{proposition}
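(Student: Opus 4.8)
The plan is an induction on $t$ that mirrors the proof of Proposition~\ref{prop:bound} but now carries a second, smaller error term of order $\sqrt3^n$. The cases $t=1,2$ are handled directly, and the step $t\ge3$ comes from plugging the inductive lower bound for $c_{t-1}$ into Proposition~\ref{prop:recur} and estimating the two finite geometric sums that appear.

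\emph{Base cases.} With $t=1$, \eqref{eq:fixedt} gives $c_1(n)=\tfrac12 2^n$, so the bound holds with equality and $r_1=0$. For $t=2$, Proposition~\ref{prop:recur} gives $c_2(n)=\sum_{k=1}^{K}3^{k-1}c_1(n-2k)$ with $K=\lfloor(n-1)/2\rfloor$; since $c_1(m)=\tfrac12 2^m$ this is geometric and equals $\tfrac12 2^n\bigl(1-(3/4)^{K}\bigr)$. Using $K\ge(n-3)/2$ and $3/4<1$ bounds the deficiency $\tfrac12 2^n(3/4)^{K}$ above by $\tfrac{4\sqrt3}{9}\sqrt3^n=r_2\sqrt3^n$. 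One cannot use the general recurrence for $r_t$ at $t=2$ because its denominator $\sqrt3^t-t-1$ vanishes there, which is exactly why $t=2$ is treated separately and why the value $\tfrac{4\sqrt3}{9}$ shows up.

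\emph{Inductive step.} Fix $t\ge3$ and assume $c_{t-1}(m)\ge q_{t-1}2^m-r_{t-1}\sqrt3^m$ for all $m$. Substituting into Proposition~\ref{prop:recur} and separating, $c_t(n)$ is bounded below by a $2^n$-piece minus a $\sqrt3^n$-piece. Summing the first (finite, geometric) series exactly gives $q_t2^n\bigl(1-(\tfrac{t+1}{2^t})^{K}\bigr)$ with $K=\bigl\lfloor(n-\binom{t}{2})/t\bigr\rfloor$; the crux is that the discarded tail $q_t2^n(\tfrac{t+1}{2^t})^{K}$ is only $O(\sqrt3^n)$. Since $K>\tfrac nt-\tfrac{t+1}2$ and $\tfrac{t+1}{2^t}<1$, one gets $2^n(\tfrac{t+1}{2^t})^{K}<(t+1)^{n/t}\bigl(\tfrac{2^t}{t+1}\bigr)^{(t+1)/2}$, and $(t+1)^{n/t}\le\sqrt3^n$ because $t+1\le 3^{t/2}$ for all $t\ge2$; the elementary inequality $(t-1)2^t\le2(2^t-t-1)(t+1)$ (valid for $t\ge3$) then shows this is at most $2q_{t-1}\bigl(\tfrac{2^t}{t+1}\bigr)^{(t-1)/2}\sqrt3^n$, the second summand of $r_t$. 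For the second series, $t\ge3$ makes $\tfrac{t+1}{\sqrt3^t}<1$, so it is at most its infinite completion $\tfrac{1}{\sqrt3^t-t-1}$, contributing the first summand $\tfrac{t-1}{\sqrt3^t-t-1}r_{t-1}\sqrt3^n$. Adding, $c_t(n)\ge q_t2^n-r_t\sqrt3^n$.

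\emph{Main obstacle.} The only genuinely non-routine point — and the source of the base $\sqrt3$ — is the bookkeeping around the truncation index $K$. In the upper bound one simply drops the truncation by summing to infinity; for a lower bound one must show the discarded part is small relative to $2^n$, and it is exactly $t+1\le 3^{t/2}$ that makes it $O(\sqrt3^n)$ rather than $\Theta(2^n)$. The remaining checks — $\sqrt3^t>t+1$ and $(t-1)2^t\le2(2^t-t-1)(t+1)$ for $t\ge3$, and the boundary case $t=2$ — are elementary; note also that for small $n$ the asserted bound is negative and hence vacuous, which does no harm to the induction.
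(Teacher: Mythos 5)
Your proof is correct and takes essentially the same route as the paper's: induction through Proposition~\ref{prop:recur}, exact summation of the finite geometric series for the main term with the truncation error pushed into the $\sqrt3^n$ term via $K>\frac nt-\frac{t+1}2$ and $(t+1)^{1/t}\le\sqrt3$, completion of the $r_{t-1}$ series to infinity, and a separate hand computation at $t=2$ where $\sqrt3^t-t-1$ vanishes. The only blemish is cosmetic: the infinite completion of $\sum_{k\ge1}\bigl(\tfrac{t+1}{\sqrt3^t}\bigr)^k$ is $\tfrac{t+1}{\sqrt3^t-t-1}$ rather than $\tfrac{1}{\sqrt3^t-t-1}$, but the contribution you then record, $\tfrac{t-1}{\sqrt3^t-t-1}r_{t-1}\sqrt3^n$, is the correct one.
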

\begin{proof}
Putting $t=1$ into \eqref{eq:fixedt} we have $c_1(n)=\frac122^n$, establishing $q_1=\frac12$ and $r_1=0$.  Now using Proposition~\ref{prop:recur} we have
\begin{align*}
c_2(n)&=\sum_{k=1}^{\lfloor{n-1\over2}\rfloor}3^{k-1}\frac122^{n-2k}
=\frac162^n\sum_{k=1}^{\lfloor{n-1\over2}\rfloor}\bigg(\frac34\bigg)^k
=\frac162^n\cdot\frac{\frac34-\big(\frac34\big)^{\lfloor{n-1\over2}\rfloor+1}}{1-\frac34}\\
&=\frac122^n-\frac232^n\bigg(\frac34\bigg)^{\lfloor{n-1\over2}\rfloor+1}
\ge\frac122^n-\frac232^n\bigg(\frac34\bigg)^{{n-1\over2}}
=\frac122^n-\frac{4\sqrt3}9\sqrt3^n,
\end{align*}
establishing $q_2=\frac12$ and $r_2=\frac{4\sqrt3}9$ (the inequality follows by noting that if we make the exponent smaller we are subtracting a larger value). Now assuming by induction we have established that $c_{t-1}(n)\ge q_{t-1}2^n-r_{t-1}{\sqrt3}^n$ we have
\begin{align*}
c_t(n)&=\sum_{k=1}^{\big\lfloor{n-{t\choose2}\over t}\big\rfloor}(t-1)(t+1)^{k-1}c_{t-1}(n-kt)\\
&\ge \sum_{k=1}^{\big\lfloor{n-{t\choose2}\over t}\big\rfloor}(t-1)(t+1)^{k-1}\big(q_{t-1}2^{n-kt}-r_{t-1}{\sqrt3}^{n-kt}\big)\\
&={(t-1)2^n\over t+1}q_{t-1}\sum_{k=1}^{\big\lfloor{n-{t\choose2}\over t}\big\rfloor}\bigg({t+1\over 2^t}\bigg)^k-{(t-1){\sqrt3}^n\over t+1}r_{t-1}\sum_{k=1}^{\big\lfloor{n-{t\choose2}\over t}\big\rfloor}\bigg({t+1\over {\sqrt3}^t}\bigg)^k.
\end{align*}

For the second term we have
\begin{align*}
{(t-1){\sqrt3}^n\over t+1}r_{t-1}\sum_{k=1}^{\big\lfloor{n-{t\choose2}\over t}\big\rfloor}\bigg({t+1\over {\sqrt3}^t}\bigg)^k
&\le {(t-1){\sqrt3}^n\over t+1}r_{t-1}\sum_{k\ge1}\bigg({t+1\over {\sqrt3}^t}\bigg)^k\\
&\le {(t-1){\sqrt3}^n\over t+1}r_{t-1}{{t+1\over\sqrt3^t}\over 1-{t+1\over\sqrt3^t}}\\
&={t-1\over\sqrt3^t-t-1}r_{t-1}\sqrt3^n.
\end{align*}

For the first term we first sum and split it into two parts, i.e.\ 
\begin{multline*}
{(t-1)2^n\over t+1}q_{t-1}\sum_{k=1}^{\big\lfloor{n-{t\choose2}\over t}\big\rfloor}\bigg({t+1\over 2^t}\bigg)^k=
{(t-1)2^n\over t+1}q_{t-1}\cdot\frac{\frac{t+1}{2^t}-\big(\frac{t+1}{2^t}\big)^{\big\lfloor{n-{t\choose2}\over t}\big\rfloor+1}}{1-\frac{t+1}{2^t}}\\
=\frac{t-1}{2^t-t-1}q_{t-1}2^n-\frac{(t-1)2^t}{(t+1)(2^t-t-1)}q_{t-1}\bigg(\frac{t+1}{2^t}\bigg)^{\big\lfloor{n-{t\choose2}\over t}\big\rfloor+1}2^n.
\end{multline*}
Proceeding similarly as before and using that $t\ge 3$ (so that among other things $(t+1)^{1/t}<\sqrt3$) we have
\begin{multline*}
\frac{(t-1)2^t}{(t+1)(2^t-t-1)}q_{t-1}\bigg(\frac{t+1}{2^t}\bigg)^{\big\lfloor{n-{t\choose2}\over t}\big\rfloor+1}2^n
\\=\underbrace{\frac{t-1}{t+1}}_{\le1}\underbrace{\frac{2^t}{2^t-t-1}}_{\le 2}q_{t-1}\bigg(\frac{t+1}{2^t}\bigg)^{\big\lfloor{n-{t\choose2}\over t}\big\rfloor+1}2^n
\le 2q_{t-1}\bigg(\frac{t+1}{2^t}\bigg)^{{n-{t\choose2}\over t}}2^n\\
= 2q_{t-1}\bigg(\frac{2^t}{t+1}\bigg)^{(t-1)/2}\big((t+1)^{1/t}\big)^n<2q_{t-1}\bigg(\frac{2^t}{t+1}\bigg)^{(t-1)/2}\sqrt3^n.
\end{multline*}

Putting this altogether we have
\[
c_t(n)\ge \underbrace{\frac{t-1}{2^t-t-1}q_{t-1}}_{=q_t}2^n-
\bigg(\underbrace{\frac{t-1}{\sqrt3^t-t-1}r_{t-1}+2\bigg({2^t\over t+1}\bigg)^{(t-1)/2}q_{t-1}}_{=r_t}\bigg)\sqrt3^n.\qedhere
\]
\end{proof}

\begin{lemma}\label{lem:lower}
Given any $\varepsilon>0$, for all $n$ sufficiently large $P(n,2)>(\gamma-\varepsilon)2^n$.
\end{lemma}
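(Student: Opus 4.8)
The plan is to combine the upper bound already in hand (Lemma~\ref{lem:upper}, $P(n,2)\le\gamma2^n$) with a matching lower bound coming from Proposition~\ref{prop:lower}. Since $P(n,2)=\sum_{t\ge1}c_t(n)$, the natural first move is to sum the per-$t$ lower bounds $c_t(n)\ge q_t2^n-r_t\sqrt3^n$ over $t$. The $q_t$ pieces sum to exactly $\gamma$ (as recorded in the remark after Proposition~\ref{prop:bound}, $\gamma=\sum_{t\ge1}q_t$), so the main work is to control the error: I need $\sum_{t\ge1}r_t$ to converge, or at least to show $\sum_t r_t\sqrt3^n=o(2^n)$. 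Because $\sqrt3<2$, once $\sum_{t=1}^{T}r_t$ is bounded (for the finitely many $t$ that can contribute at a given $n$), the error term is $O(\sqrt3^n)=o(2^n)$ and we are done.

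More carefully, one must not sum over all $t\ge1$ blindly: for a fixed $n$ only $t$ with $\binom t2\le n$, i.e.\ $t=O(\sqrt n)$, contribute to $P(n,2)$, since a partition of $n$ into $t$ distinct parts needs $n\ge\binom{t+1}2$. So I would write $P(n,2)=\sum_{t\le T(n)}c_t(n)$ with $T(n)=O(\sqrt n)$, apply Proposition~\ref{prop:lower} termwise to get $P(n,2)\ge\big(\sum_{t\le T(n)}q_t\big)2^n-\big(\sum_{t\le T(n)}r_t\big)\sqrt3^n$, and then bound the two sums. For the first, $\sum_{t\le T(n)}q_t=\gamma-\sum_{t>T(n)}q_t\ge\gamma-\varepsilon/2$ for $n$ large, because $q_t$ decays super-exponentially (the ratio $q_t/q_{t-1}=(t-1)/(2^t-t-1)\to0$), so the tail is tiny. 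For the second, I claim $\sum_{t\ge1}r_t$ converges: from the recursion $r_t=\frac{t-1}{\sqrt3^t-t-1}r_{t-1}+2\big(\frac{2^t}{t+1}\big)^{(t-1)/2}q_{t-1}$, the multiplier $\frac{t-1}{\sqrt3^t-t-1}$ tends to $0$, while the inhomogeneous term, although it grows like roughly $(2^{t}/t)^{t/2}$, is multiplied by $q_{t-1}$, which decays like $2^{-t^2/2}$ up to polynomial factors; a quick estimate shows $\big(2^t/(t+1)\big)^{(t-1)/2}q_{t-1}\to0$, indeed summably. Hence $R:=\sum_{t\ge1}r_t<\infty$, and so $P(n,2)\ge(\gamma-\varepsilon/2)2^n-R\sqrt3^n$.

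To finish, pick $n$ large enough that simultaneously $\sum_{t>T(n)}q_t<\varepsilon/2$ and $R\sqrt3^n<(\varepsilon/2)2^n$ (the latter holds for all large $n$ since $(\sqrt3/2)^n\to0$); then $P(n,2)>(\gamma-\varepsilon/2)2^n-(\varepsilon/2)2^n=(\gamma-\varepsilon)2^n$, which is exactly the claim. Combined with Lemma~\ref{lem:upper} this also yields the asymptotic statement $P(n,2)=(\gamma-o(1))2^n$ of Theorem~\ref{thm:asym}.

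The step I expect to be the main obstacle is verifying that $\sum_{t\ge1}r_t$ converges --- specifically, checking that the inhomogeneous term $2\big(2^t/(t+1)\big)^{(t-1)/2}q_{t-1}$ in the $r_t$ recursion, where the base $\big(2^t/(t+1)\big)^{(t-1)/2}$ is itself doubly exponential in $t$, is nonetheless killed by the doubly-exponentially-small factor $q_{t-1}=\tfrac12\prod_{i=2}^{t-1}\frac{i-1}{2^i-i-1}$. Writing $q_{t-1}$ out and taking logarithms should show $\log q_{t-1}\sim-\tfrac12 t^2\log2$ while $\log\big((2^t/(t+1))^{(t-1)/2}\big)\sim\tfrac12 t^2\log2$, so the product decays only like a polynomial-times-exponential correction --- one needs to be a little careful that the constants line up so the product actually tends to $0$ (and is summable), rather than to a positive constant. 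Once that estimate is pinned down, everything else is a routine tail-of-a-convergent-series argument.
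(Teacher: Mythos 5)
Your overall strategy is the right one, but it hinges on a claim that is in fact false: $\sum_{t\ge1}r_t$ does \emph{not} converge. The inhomogeneous term in the recursion for $r_t$ does not tend to $0$; it diverges. To see this, note that $q_{t-1}=\frac12\prod_{i=2}^{t-1}\frac{i-1}{2^i-i-1}\ge\frac12\prod_{i=2}^{t-1}\frac{i-1}{2^i}=(t-2)!/2^{t(t-1)/2}$, so
\[
2\bigg(\frac{2^t}{t+1}\bigg)^{(t-1)/2}q_{t-1}\;\ge\;\frac{2\,(t-2)!}{(t+1)^{(t-1)/2}},
\]
and taking logarithms gives $t\ln t-\tfrac{t}{2}\ln t-t+O(\ln t)=\tfrac{t}{2}\ln t-t+O(\ln t)\to\infty$. (Numerically the term is already about $2,\ 2.9,\ 3.9,\ 5.3,\ 7.5$ for $t=3,\dots,7$ and keeps growing.) The two ``doubly exponential'' factors $2^{t(t-1)/2}$ cancel exactly, and what is left is $(t-2)!$ against only $(t+1)^{(t-1)/2}$, so the factorial wins. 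Hence $r_t\to\infty$, your constant $R$ does not exist, and the step $P(n,2)\ge(\gamma-\varepsilon/2)2^n-R\sqrt3^n$ is not available as written. You correctly identified this as the point needing care, but the ``quick estimate'' comes out the opposite way.

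The fix the paper uses sidesteps the issue entirely and is worth internalizing: since every $c_t(n)\ge0$, you may simply \emph{discard} all terms with $t>m$ for a fixed $m=m(\varepsilon)$ chosen so that $\sum_{t=1}^{m}q_t>\gamma-\tfrac12\varepsilon$ (possible because $\sum_t q_t=\gamma$ converges). Then $P(n,2)\ge\sum_{t=1}^{m}c_t(n)\ge\big(\sum_{t=1}^{m}q_t\big)2^n-A\sqrt3^n$ with $A=\sum_{t=1}^{m}r_t$ a \emph{finite} constant independent of $n$, and $A\sqrt3^n\le\tfrac12\varepsilon 2^n$ for $n$ large. No convergence of $\sum_t r_t$ is needed. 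Your variant with the cutoff $T(n)=O(\sqrt n)$ can also be repaired --- one can check $\sum_{t\le T(n)}r_t=e^{O(\sqrt n\log n)}$, which is still $o\big((2/\sqrt3)^n\big)$ --- but that requires exactly the growth analysis of $r_t$ that the fixed-$m$ truncation renders unnecessary.
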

\begin{proof}
Since $\gamma=\sum_{t\ge 1}q_t$ and $q_t>0$, there is some $m$ so that $\sum_{t=1}^mq_t>\gamma-\frac12\varepsilon$.  For this $m$ we can use Proposition~\ref{prop:lower} to get
\begin{equation*}
P(n,2){=}\sum_{t\ge1}c_t(n){\ge}\sum_{t=1}^mc_t(n)
\ge \bigg(\sum_{t=1}^mq_t\bigg)2^n{-}\bigg(\underbrace{\sum_{t=1}^mr_t}_{=A}\bigg)\sqrt3^n{>}(\gamma-\frac12\varepsilon)2^n-A\sqrt3^n.
\end{equation*}
Since $\sqrt3<2$ then for $n$ sufficiently large we have $A\sqrt3^n\le \frac12\varepsilon2^n$.  In particular for such large $n$ we have $P(n,2)\ge (\gamma-\varepsilon)2^n$ establishing the result.
\end{proof}

Theorem~\ref{thm:asym} now follows immediately by combining Lemmas~\ref{lem:upper} and \ref{lem:lower}.

\section{Lower bound for prime patterns with $b\ge 2$ balls}\label{sec:bigb}
We can also consider the problem of counting prime juggling patterns for $b\ge3$ balls.  The natural thing is to again consider anchors, though it is unclear whether anchors should be one or more balls.  Each grouping of anchors should have some variation of the partition statistics similar to what was presented here, but a key difficulty is finding how to connect the anchors together.  In Table~\ref{data} we give some data for $P(n,b)$.  

\begin{table}[thb]
\centering
\[
\begin{array}{|l||r|r|r|}\hline
  P(n,b)  &    b{=}3&   b{=}4&   b{=}5\\ \hline\hline
 n{=}\phantom{1}1&      1&       1&       1\\ \hline
 n{=}\phantom{1}2&      3&       4&       5\\ \hline
 n{=}\phantom{1}3&     11&      19&      29\\ \hline
 n{=}\phantom{1}4&     36&      83&     157\\ \hline
 n{=}\phantom{1}5&    127&     391&     901\\ \hline
 n{=}\phantom{1}6&    405&    1663&    4822\\ \hline
 n{=}\phantom{1}7&   1409&    7739&   27447\\ \hline
 n{=}\phantom{1}8&   4561&   33812&  149393\\ \hline
 n{=}\phantom{1}9&  15559&  153575&  836527\\ \hline
n{=}10&  50294&   677901&   4610088\\ \hline
n{=}11& 169537&  3075879&  25846123\\ \hline
n{=}12& 551001& 13586581& 142296551\\ \hline
\end{array}
\]
\caption{Some data for $P(n,b)$.}
\label{data}
\end{table}

For $b=3,4,5$, the number of patterns appears to grow roughly as an exponential function in $b$.  We will establish a lower bound which supports this belief.

\begin{proposition}\label{prop:lb}
$P(n,b)\ge b^{n-1}$.
\end{proposition}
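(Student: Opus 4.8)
The plan is to exhibit an explicit injection from a set of size $b^{n-1}$ into the set of prime juggling patterns of period $n$ with $b$ balls. The natural candidates to encode are the siteswap sequences $t_1t_2\cdots t_n$ themselves, so I would look for a large family of such sequences that (i) are automatically valid juggling sequences, (ii) are automatically prime (no repeated state), and (iii) can be specified by $n-1$ free choices from an alphabet of size $b$. A convenient way to guarantee validity and primality simultaneously is to work with the card/permutation model of juggling alluded to in the text: a juggling pattern of period $n$ with $b$ balls corresponds to a certain kind of sequence of ``cards'' (crossings among $b$ tracks), and primality of the pattern corresponds to the associated closed walk visiting $n$ distinct states. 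I would set up that correspondence precisely first, since Section~\ref{sec:bigb} explicitly promises to ``visit'' the card interpretation here.

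The concrete construction I have in mind: fix the first throw (or the first card) and let the remaining $n-1$ positions range freely over a $b$-element set of options, chosen so that the running state is forced to change at every step in a way that cannot return to a previously seen state before time $n$. For instance, one can try sequences in which each throw height lies in $\{1,2,\dots,b\}$ with the single global constraint $\sum_i t_i = bn$ (the average-height condition), or — better for getting a clean $b^{n-1}$ — sequences built so that at beat $i$ exactly one of $b$ mutually exclusive ``moves'' is made, with the first beat pinned down to break the cyclic symmetry. The key structural claim to prove is: \emph{for any such choice, the $n$ states encountered are pairwise distinct}. I would prove this by tracking a monovariant along the walk — e.g., reading off which ball is ``lowest'' or the position of a distinguished marked ball — and showing that after fewer than $n$ steps this monovariant cannot have returned to its initial value, so no state repeats. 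Finally I would check that distinct choices of the $n-1$ free parameters give distinct patterns (this is where pinning the first beat matters: it kills the period-$n$ cyclic ambiguity that would otherwise collapse $n$ encodings to one).

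The main obstacle is pinning down a family that is simultaneously (a) genuinely of size $b^{n-1}$ on the nose, not merely $\Omega(b^{n-1})$, (b) visibly valid, and (c) provably prime. Validity and ``size exactly $b^{n-1}$'' pull in opposite directions — the honest set of valid siteswaps of period $n$ with $b$ balls is roughly $\big((b+1)^n - b^n\big)/n$, so any clean subfamily of size $b^{n-1}$ has to be chosen with some care, and the division-by-$n$ phenomenon (cyclic rotations) must be handled, presumably by fixing the starting state/throw as above. I expect the primality verification to be the technical heart: one must rule out \emph{every} earlier repeat, not just the immediate ones, which is exactly why a clean monovariant or a direct argument via the card model (where one can read primality off a combinatorial feature of the card sequence) is the right tool. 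If the monovariant route proves fiddly, the fallback is to realize the $b^{n-1}$ patterns as those whose card sequence uses a fixed card in position $1$ and arbitrary cards elsewhere from a size-$b$ menu, and to quote the standard fact that such a card sequence yields a prime pattern precisely when it is not a proper cyclic power — which the pinned first card guarantees.
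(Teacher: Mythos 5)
Your outline does land on the paper's strategy---encode patterns by juggling cards, pin one distinguished card to break cyclic symmetry and force distinctness, and let the other $n-1$ positions range over a $b$-card menu---but as written there are two genuine gaps. First, you never commit to \emph{which} card is pinned and \emph{which} $b$ cards form the menu, and that choice is precisely what makes primality provable rather than merely plausible. The paper's choice: the distinguished card is $C_b$ (the card whose thrown ball is rerouted to the \emph{last} landing position), placed at position $n$, and the other $n-1$ cards are drawn from $\{C_0,\dots,C_{b-1}\}$. With that choice your hoped-for monovariant becomes concrete: since $C_b$ is never used during the first $n-1$ beats, the ball scheduled to land last keeps that role throughout, so the index of the last $1$ in the state vector decreases by exactly $1$ at each of those steps; the $n$ states therefore carry $n$ distinct values of this statistic and cannot repeat. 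Without specifying the excluded card, ``the position of a distinguished marked ball'' is not yet a monovariant, because a generic card can overtake the marked ball.

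Second, and more seriously, your fallback is based on a false equivalence: it is \emph{not} a standard fact that a card sequence yields a prime pattern precisely when it is not a proper cyclic power. Failing to be a cyclic power only makes the pattern aperiodic (it does not repeat its \emph{initial} state before time $n$, i.e.\ it is primitive in the sense of Chung--Graham); primality demands that \emph{no} state repeats, which is strictly stronger. The pattern $03034$ in Table~\ref{table:state} is not a cyclic power of anything shorter, yet it revisits the state $\langle1,0,1,0\rangle$ and so is not prime---indeed the entire point of the paper is that prime patterns are much rarer than primitive ones. So if the monovariant route is abandoned, the proof collapses; the argument must go through the specific card restriction above, for which the ``last ball never rethrown until beat $n$'' observation does all the work.
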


We will use the card interpretation of juggling sequences to help establish this result.  For a fixed $b$ there are $b+1$ cards, denoted $C_0$, $C_1$, \dots, $C_b$, which have on the left and right of each card $b$ slots (corresponding to the balls). These slots are connected by tracks which either go straight across (i.e.\ $C_0$) or have the bottom track drop down and then reposition itself relative to the other tracks. In particular, the cards keep track of the \emph{relative order} of the balls at any given time.  The set of these cards for $b=4$ is shown in Figure~\ref{fig:cards}.

\begin{figure}[hft]
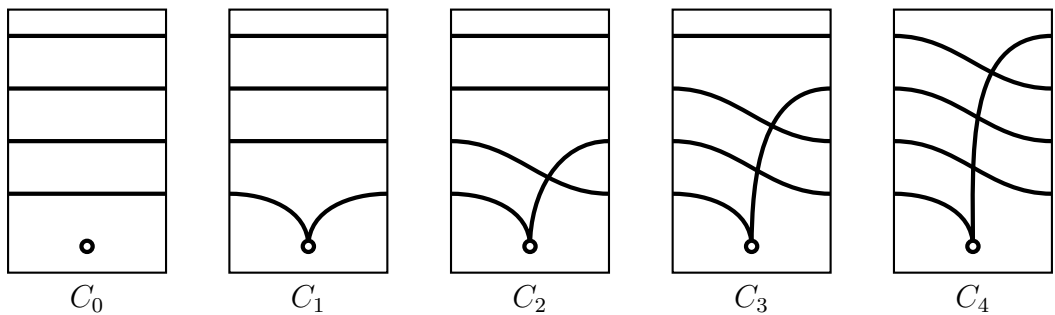

\centering
\picRRR
\caption{The set of juggling cards for $b=4$.}
\label{fig:cards}
\end{figure}

All siteswap patterns of length $n$ with $b$ balls correspond bijectively with $n$ cards drawn with replacement from $\{C_0,C_1,\ldots,C_b\}$ placed consecutively in some order with the card $C_b$ used at least once (see \cite{Cards,Polster}).  This can be seen by noting that placing the cards together gives a variant of the juggling diagram (i.e., the diagram that traces out the path of the balls over a time window of period $n$) from which the juggling pattern can be recovered.

\begin{proof}[Proof of Proposition~\ref{prop:lb}]
There are precisely $b^{n-1}$ ways to place $n$ cards consecutively where the first $n-1$ cards are drawn with replacement from $\{C_0,C_1,\ldots,C_{b-1}\}$ and the $n$-th card is $C_b$.  It suffices to show that these are prime (distinctness comes from the uniqueness of $C_b$).

To see that they are prime we note that the cards keep track of the ordering of the balls and because we only use $C_b$ in the last card it must be the case that the ball which is scheduled to land last among the thrown balls remains in  that ordering until the $n$-th step.  In terms of the states, this says that the last $1$ shifts down by $1$ at every step.  In particular, the location of the last $1$ will be different in all of the states, which forces the states to be distinct, i.e., a prime juggling pattern.
\end{proof}

\section{Concluding remarks}\label{sec:conclusion}
We have looked at what happens when we fix $b=2$ and let $n$ get large.  Alternatively one could fix $n$ and let $b$ get large. This variation was considered by Ron Graham \cite{Graham} who showed the following.
\begin{align*}
\text{For $b\ge 1$, }&P(2,b)=b\\
\text{For $b\ge 1$, }&P(3,b)=b^2+b-1\\
\text{For $b\ge 2$, }&P(4,b)=b^3+\frac32b^2-\frac12b-3\\
\text{For $b\ge 3$, }&P(5,b)=b^4+2b^3+2b^2+b-29\\
\text{For $b\ge 4$, }&P(6,b)=b^5+\frac52b^4+\frac{10}3b^3-4b^2-\frac{191}6b - 23
\end{align*}
In particular we note that when $n$ is fixed and $b$ gets large that \emph{most} juggling patterns are prime, while the $b=2$ case and data in Table~\ref{data} indicates that when $b$ is fixed and $n$ gets large that \emph{most} juggling patterns are not prime.

\bigskip

One common restraint for looking at the mathematics of juggling is to assume that we always throw a ball at each beat.  In terms of siteswap sequences, this means all of the $t_i\ge 1$.  One common approach to this is to first carry out computations with allowing $t_i\ge0$ and have one less ball, and then increase each $t_i$ by $1$ (i.e.\ adding $1$ to every throw height increases the number of balls by $1$).  We can do this for prime juggling patterns because of the following.

\begin{observation}
The siteswap sequence $t_1t_2\ldots t_n$ is prime if and only if the siteswap sequence $t_1't_2'\ldots t_n'$, where $t_i'=t_i+1$, is prime.
\end{observation}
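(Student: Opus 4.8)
The plan is to understand precisely how the operation $t_i \mapsto t_i + 1$ acts on the closed walk in the state graph, and to argue that it induces a bijection on the visited states which preserves (and reflects) the property of all states being distinct. The key algebraic fact is the classical observation that adding $1$ to every throw height in a siteswap corresponds, at the level of states, to the map that inserts a $1$ at the front of each state vector and shifts everything else up by one position. Concretely, if $\mathbf{c} = \langle c_1, c_2, \ldots\rangle$ is a state visited by the pattern $t_1 t_2 \ldots t_n$ at beat $j$, then the corresponding state $\mathbf{c}'$ visited by $t_1' t_2' \ldots t_n'$ at beat $j$ is $\langle 1, c_1, c_2, \ldots\rangle$. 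First I would verify this claim by checking that it is consistent with the transition rule $\mathbf{c} \to \mathbf{d}$ iff $d_i \ge c_{i+1}$: one shows that $\mathbf{c} \to \mathbf{d}$ under throw $t$ if and only if $\langle 1, c_1, c_2, \ldots \rangle \to \langle 1, d_1, d_2, \ldots\rangle$ under throw $t+1$, which is a short direct computation with the transition inequalities (the leading $1$ is always consumed and re-thrown, accounting for the $+1$, and a throw of height $0$ — no ball — in the original becomes a throw of height $1$, i.e. the ball is immediately re-thrown one beat later, which is exactly what the inserted leading $1$ records).

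Once that correspondence is in hand, the argument is essentially immediate. The map $\Phi : \langle c_1, c_2, \ldots\rangle \mapsto \langle 1, c_1, c_2, \ldots\rangle$ is injective on the set of all states (it is a literal prefix insertion, so distinct inputs give distinct outputs). Therefore the multiset of states $\{\mathbf{c}^{(1)}, \ldots, \mathbf{c}^{(n)}\}$ visited by $t_1 \ldots t_n$ has all elements distinct if and only if the multiset $\{\Phi(\mathbf{c}^{(1)}), \ldots, \Phi(\mathbf{c}^{(n)})\} = \{(\mathbf{c}^{(1)})', \ldots, (\mathbf{c}^{(n)})'\}$ visited by $t_1' \ldots t_n'$ has all elements distinct. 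Since "prime" means exactly that the closed walk is a cycle, i.e. that the $n$ visited states are pairwise distinct, this gives both directions of the equivalence at once. I would also remark that the number of balls increases by exactly $1$ under this operation (each state gains one additional $1$), so the statement is really comparing a $b$-ball pattern with a $(b+1)$-ball pattern — but the primality equivalence does not depend on $b$.

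The main obstacle — really the only place requiring care — is pinning down the state-level description of the $+1$ operation rigorously, including the edge cases: a throw height of $0$ in the original sequence (no ball thrown), and the "wrap-around" at the end of the period. For the $0$ case one must check that inserting the leading $1$ correctly models a height-$1$ throw; for the wrap-around one must check that the closed walk remains closed, i.e. that $\Phi$ of the starting state equals the state reached after applying all of $t_1' \ldots t_n'$, which follows from applying the transition correspondence $n$ times around the cycle. I would handle these by writing out the transition rule carefully in both directions and noting that $\Phi$ commutes with "advance one beat and throw height $t$" $\mapsto$ "advance one beat and throw height $t+1$" as a relation on state pairs; everything else is bookkeeping. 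No deep idea is needed beyond the prefix-insertion picture, so I expect the proof to be quite short.
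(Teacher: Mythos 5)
Your proposal is correct and matches the paper's argument: the paper proves the observation via Theorem~\ref{thm:isomgraph}, which states that prepending a $1$ to each state gives an isomorphism from the $b$-ball state graph onto the induced subgraph of the $(b+1)$-ball state graph on states beginning with $1$, with edge labels shifted by $+1$ (verified by the same two cases $c=0$ and $c\ge 1$ you identify). Since this is exactly your prefix-insertion map $\Phi$ together with the transition correspondence and injectivity, the two arguments are essentially identical.
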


This is a consequence of the following more general result.

\begin{theorem}\label{thm:isomgraph}
The state graph with $b$ balls is isomorphic to the (induced) subgraph of the state graph with $b+1$ balls consisting of vertices whose state starts with $1$.
\end{theorem}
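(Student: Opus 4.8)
The plan is to build the isomorphism by hand, the guiding idea being that the one extra ball in the $(b{+}1)$-ball picture should simply encode the guaranteed landing "at the current beat" carried by a leading $1$. Concretely, I would define a map $\phi$ from the states of the $b$-ball graph to the states of the $(b{+}1)$-ball graph by prepending a $1$, $\phi(\langle c_1,c_2,\ldots\rangle)=\langle 1,c_1,c_2,\ldots\rangle$, and then verify three things: (i) $\phi$ is a bijection from the set of $b$-ball states onto the set of $(b{+}1)$-ball states whose leading entry is $1$; (ii) $\phi$ preserves edges; (iii) $\phi$ reflects edges. Item (i) is immediate: $\phi$ inserts exactly one additional $1$ (so the image has $b{+}1$ ones), its image is exactly the vectors starting with $1$, and deleting the leading entry is a two-sided inverse (the result has exactly $b$ ones, hence is a legitimate $b$-ball state).

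For (ii) and (iii) I would invoke the edge criterion from the introduction, $\mathbf{c}\to\mathbf{d}\iff d_i\ge c_{i+1}$ for all $i\ge 1$. Writing $\mathbf{c}'=\phi(\mathbf{c})$ and $\mathbf{d}'=\phi(\mathbf{d})$, so that $c'_1=d'_1=1$ and $c'_k=c_{k-1}$, $d'_k=d_{k-1}$ for $k\ge 2$, the condition "$d'_i\ge c'_{i+1}$ for all $i\ge 1$" splits into the case $i=1$, which reads $1=d'_1\ge c'_2=c_1$ and holds automatically, and the cases $i\ge 2$, which read $d_{i-1}\ge c_i$; as $i-1$ ranges over all positive integers this is literally "$d_j\ge c_{j+1}$ for all $j\ge 1$". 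Hence $\mathbf{c}'\to\mathbf{d}'$ in the $(b{+}1)$-ball graph if and only if $\mathbf{c}\to\mathbf{d}$ in the $b$-ball graph, which gives (ii) and (iii) simultaneously — and it is exactly this "if and only if", together with (i), that makes the image the \emph{induced} subgraph rather than merely some subgraph.

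That already proves the theorem; finally, to obtain the Observation I would check that $\phi$ also respects edge labels. The label of an edge $\mathbf{c}\to\mathbf{d}$ is $0$ when $c_1=0$ (no throw) and otherwise the unique index $i$ with $d_i=1$ and $c_{i+1}=0$ (the slot into which the caught ball is thrown). Running this description through the same bookkeeping, under $\phi$ the distinguished "new" $1$ of $\mathbf{d}'$ sits one coordinate to the right of the new $1$ of $\mathbf{d}$, and a former label-$0$ edge becomes a throw into slot $1$; so every label $i$ becomes $i{+}1$. Since $\phi$ also carries closed walks to closed walks and cycles to cycles, this says precisely that $t_1\ldots t_n$ is prime iff $(t_1{+}1)\ldots(t_n{+}1)$ is prime.

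The hard part, such as it is, is purely bookkeeping: keeping the off-by-one consistent between the shift that is built into the transition relation "$d_i\ge c_{i+1}$" and the coordinate that $\phi$ prepends, and being careful to record the "only if" direction so that the codomain is genuinely the induced subgraph. I do not anticipate any conceptual obstacle.
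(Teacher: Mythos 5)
Your proposal is correct and uses essentially the same isomorphism as the paper, namely prepending a $1$ to each state; the only difference is that you verify edge preservation via the criterion $d_i\ge c_{i+1}$ from the introduction, whereas the paper does an explicit case analysis on the throw label $c=0$ versus $c\ge1$ (which also makes the label shift $c\mapsto c+1$, needed for the Observation, fall out directly). Your index bookkeeping is right, and your separate check that labels shift by one recovers the same conclusion.
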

\begin{proof}
The embedding works by sending $\langle a_1,a_2,\ldots \rangle$ in the the state graph for $b$ balls to $\langle 1,a_1,a_2,\ldots \rangle$ in the state graph for $b+1$ balls.

It remains to show  $\langle a_1,a_2,\ldots \rangle \stackrel{c}{\longrightarrow} \langle b_1,b_2,\ldots \rangle$ in the state graph for $b$ balls if and only if $\langle 1,a_1,a_2,\ldots \rangle \stackrel{c+1}{\longrightarrow} \langle 1,b_1,b_2,\ldots \rangle$. So suppose $\langle a_1,a_2,\ldots \rangle \stackrel{c}{\longrightarrow} \langle b_1,b_2,\ldots \rangle$ in the state graph for $b$ balls. We now break into two cases depending on $c$.
\begin{itemize}
\item $c=0$. In this case $a_1 = 0$ and $b_{k} = a_{k+1}$ for all $k$. It follows that the edge in the state graph with $b$ balls is of the form
\[
\langle0,a_2,\ldots\rangle\stackrel{0}{\longrightarrow}\langle a_2,a_3\ldots\rangle.
\]
While in the state graph for $b+1$ balls the edge is of the form
\[
\langle1,0,a_2,\ldots\rangle\stackrel{1}{\longrightarrow}\langle 1,a_2,a_3\ldots\rangle.
\]
\item $c\ge1$. In this case $a_1 = 1$, $a_{c+1} = 0$, $b_c = 1$, and $b_k = a_{k+1}$ for all $k \neq c$.  It follows that the edge in the state graph with $b$ balls is of the form
\[
\langle1,a_2,\ldots,a_{c},a_{c+1}=0,a_{c+2},\ldots\rangle\stackrel{c}{\longrightarrow}\langle a_2,\ldots,a_{c},b_c=1,a_{c+2},\ldots\rangle.
\]
While in the state graph for $b+1$ balls the edge is of the form
\[
\langle1,1,a_2,\ldots,a_{c},a_{c+1}=0,a_{c+2},\ldots\rangle\stackrel{c+1}{\longrightarrow}\langle 1,a_2,\ldots,a_{c},b_c=1,a_{c+2},\ldots\rangle.
\qedhere
\]
\end{itemize}
\end{proof}

Therefore, by Theorem~\ref{thm:isomgraph}, we have that the formula provided in Theorem~\ref{thm:parts} also counts the number of three-ball prime juggling patterns of period $n$ where a ball is thrown at each beat.

\bigskip

\noindent\textbf{Acknowledgments.}~~The authors are grateful for many useful comments and discussions with Ron Graham on the mathematics of juggling.  The research was conducted at the 2015 REU program held at Iowa State University which was supported by NSF DMS 1457443.

\end{document}